\newtheorem{theorem}{Theorem}[section]
\newtheorem{lemma}[theorem]{Lemma}
\newtheorem{proposition}[theorem]{Proposition}
\newtheorem{corollary}[theorem]{Corollary} 
\theoremstyle{definition}  
\newtheorem{definition}[theorem]{Definition}
\newtheorem{example}[theorem]{Example}
\newtheorem{remark}[theorem]{Remark}
\newcommand{\id}{\text{id}}
\newcommand{\Fun}{\text{Fun}}
\newcommand{\Coh}{\text{Coh}}
\newcommand{\Irr}{\text{Irr}} 
\renewcommand{\Vec}{\text{Vec}}
\newcommand{\Hom}{\text{Hom}}
\newcommand{\Ind}{\text{Ind}}
\newcommand{\C}{\mathcal{C}}
\newcommand{\cP}{\mathcal{P}}
\newcommand{\D}{\mathcal{D}}
\newcommand{\F}{\mathcal{F}}
\newcommand{\Z}{\mathcal{Z}}
\newcommand{\M}{\mathcal{M}}
\newcommand{\A}{\mathcal{A}}
\newcommand{\K}{\mathbb{K}}
\newcommand{\be}{\mathbf{1}}
\renewcommand{\be}{\mathbf{1}}
\newcommand{\BZ}{{\mathbb Z}}
\newcommand{\BC}{{\mathbb C}}
\newcommand{\BQ}{{\mathbb Q}}
\newcommand{\bt}{\boxtimes}
\newcommand{\bc}{{\bf c}}
\newcommand{\ot}{\otimes}
\newcommand{\st}{\circledast}
\begin{document}

\title{Tensor categories attached to exceptional cells in Weyl groups}

\author{Victor Ostrik}
\address{Department of Mathematics,
University of Oregon, Eugene, OR 97403, USA}
\email{vostrik@math.uoregon.edu}

\begin{abstract}
Using truncated convolution of perverse sheaves on a flag variety 
Lusztig associated a monoidal category to a two  sided cell in the Weyl group.
We identify this category in the case which was not decided previously.
\end{abstract}

\date{\today} 
\maketitle  

\section{Introduction}
Let $W$ be a finite irreducible crystallographic Coxeter group and let $\Irr(W)$ be the
set of isomorphism classes of irreducible representations of $W$ over $\BQ$.
It is well known that two irreducible representations of dimension 512 for $W$ of type $E_7$ 
and four irreducible representations of dimension 4096 for $W$ of type $E_8$ behave differently
from other elements of $\sqcup_W\Irr(W)$ in many ways, see e.g. \cite{Cu}, \cite{BL}, 
\cite[Theorem 4.23]{Lb} (see \cite[p. 109]{Lb} for the definition of function $\Delta$). 
For this reason the representations above are called {\em exceptional}.
In this note we give one more example of unusual behavior of exceptional representations
or rather of associated geometric objects.

Recall that the group $W$ is partitioned into {\em two sided cells}, $W=\sqcup \bc$, see e.g. 
\cite[p. 137]{Lb}. 
We say that a two sided cell $\bc$ is {\em exceptional} if the corresponding {\em family} 
(see \cite[5.15, 5.25]{Lb}) consists of exceptional representations of $W$. 
Thus there are just three exceptional two sided cells,
one for $W$ of type $E_7$ and two for $W$ of type $E_8$, see \cite[Chapter 4]{Lb}. 

We fix an algebraically closed field $\K$ of characteristic zero.
Let $G$ be a simple algebraic group over $\BC$ with the Weyl group $W$. 
Using truncated convolution of perverse $\K-$sheaves (in the classical topology) 
on the flag variety of $G$,
Lusztig associated to each two sided cell $\bc \subset W$
a semisimple monoidal category $\cP^{\bc}$ (over $\K$) , see \cite{Lt} and 
\S \ref{trunk} below. 
Moreover, Lusztig conjectured that there is a tensor equivalence 
$\cP^\bc \simeq \Coh_{A(\bc)}(Y\times Y)$ where $A(\bc)$ is a finite group associated
with family corresponding to $\bc$ in \cite[Chapter 4]{Lb}, $Y$ is a finite $A(\bc)$-set
and $\Coh_{A(\bc)}(Y\times Y)$ is the category of $A(\bc)$-equivariant
 sheaves on the set $Y\times Y$ with convolution as a tensor product and a natural 
 associativity constraint, see \cite[\S 3.2]{Lt}.
This conjecture was verified in \cite{BFO} for all non-exceptional two sided cells.

Let $\bc$ be an exceptional cell. Then $A(\bc)=\BZ/2\BZ$ is the cyclic group of order 2. 
The Lusztig's conjecture from \cite{Lt} predicts that we have a tensor equivalence
$\cP^{\bc}\simeq \Coh_{A(\bc)}(Y\times Y)$ where $Y$ is a finite set with {\em free} 
$A(\bc)=\BZ/2\BZ$-action (the set $Y$ should be of cardinality 1024 if $W$ is of type $E_7$
 and of cardinality 8192 if $W$ is of type $E_8$). Our main goal
is to show that one needs to change the associativity constraint a little bit in order to make this
statement correct.

Let $Y'$ be a finite set of cardinality 512 if $W$ is of type $E_7$ and of cardinality 4096
if $W$ is of type $E_8$ (one can identify $Y'$ with
the set of $\BZ/2\BZ$-orbits on the set $Y$). Let $\Vec_{\BZ/2\BZ}$ be the
monoidal category of finite dimensional $\BZ/2\BZ$-graded spaces. Then there is a tensor
equivalence $\Coh_{\BZ/2\BZ}(Y\times Y)\simeq \Vec_{\BZ/2\BZ}\bt \Coh(Y'\times Y')$.
The category $\Vec_{\BZ/2\BZ}$ has two simple objects, the unit object $\be$ and one more
simple object $\delta$. Let $\Vec_{\BZ/2\BZ}^\omega$ be the same as category $\Vec_{\BZ/2\BZ}$
but with modified associativity constraint: for simple objects $X, Y, Z$ the associativity
constraint $(X\ot Y)\ot Z\simeq X\ot (Y\ot Z)$ is the same as in the category $\Vec_{\BZ/2\BZ}$
if at least one of $X, Y, Z$ is isomorphic to $\be$ and the associativity constraint
$(\delta \ot \delta)\ot \delta \simeq \delta \ot (\delta \ot \delta)$ in $\Vec_{\BZ/2\BZ}^\omega$ 
differs by sign from the one in $\Vec_{\BZ/2\BZ}$. Here is our main result:

\begin{theorem} \label{main}
For an exceptional two sided cell $\bc$ there is a tensor equivalence $\cP^{\bc}\simeq 
\Vec_{\BZ/2\BZ}^\omega \bt \Coh(Y'\times Y')$.
\end{theorem}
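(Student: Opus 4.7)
The plan is to reduce the theorem to the determination of a single sign $\omega \in H^3(\BZ/2\BZ, \K^\times) \cong \BZ/2\BZ$, and then to compute that sign, which is the main obstacle.

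For the first step, I would identify $\cP^{\bc}$ as a fusion category up to this twist. By Lusztig's theory of cells, the Grothendieck ring of $\cP^{\bc}$ is the based ring $J_\bc$, which for an exceptional cell is isomorphic to the fusion ring of $\Coh_{\BZ/2\BZ}(Y \times Y) \simeq \Vec_{\BZ/2\BZ} \bt \Coh(Y' \times Y')$. Let $\mE \subset \cP^{\bc}$ be the pointed fusion subcategory generated by the invertible simple objects, so $\mE$ has rank $|A(\bc)| = 2$. The structure of the fusion ring together with the cohomological rigidity of $\Coh(Y' \times Y')$ (it is the unique fusion category with its Grothendieck ring) forces a decomposition $\cP^{\bc} \simeq \mE \bt \Coh(Y' \times Y')$ as fusion categories. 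Since the only pointed fusion categories with underlying group $\BZ/2\BZ$ are $\Vec_{\BZ/2\BZ}$ and $\Vec_{\BZ/2\BZ}^\omega$, the theorem reduces to ruling out $\mE \simeq \Vec_{\BZ/2\BZ}$.

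The nontrivial invertible object $\delta \in \mE$ satisfies $\delta \ot \delta \simeq \be$, so the associator $\alpha_{\delta,\delta,\delta}$ is a scalar in $\{\pm 1\}$ that classifies $\mE$. Two natural strategies for computing this scalar suggest themselves. The first is direct geometry: realize $\delta$ as a specific simple perverse sheaf, concretely, the IC-extension of the nontrivial local system on the Bruhat cell of a distinguished involution in $\bc$ (the nontrivial class in $A(\bc) = \BZ/2\BZ$ corresponds to a genuine $\BZ/2\BZ$-monodromy), and compute the associator of truncated convolution on the triple product $\delta * \delta * \delta$ using cohomological machinery. The second is indirect: identify the Drinfeld center $\Z(\cP^{\bc})$ with a known category of Harish-Chandra bimodules or character sheaves via \cite{BFO}-type methods, and detect the twist by the presence (or absence) in $\Z(\cP^{\bc})$ of a \emph{semion}, a simple object of dimension one with self-braiding a primitive fourth root of unity, which occurs in $\Z(\Vec_{\BZ/2\BZ}^\omega)$ but not in $\Z(\Vec_{\BZ/2\BZ})$.

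The main obstacle is executing one of these computations. The most promising angle exploits the exceptional geometry: the cell $\bc$ arises from a cuspidal local system on a nilpotent orbit with component group $\BZ/2\BZ$, and the nontrivial monodromy of this local system is what should produce the sign. A clean derivation may proceed instead by comparing the action of $\cP^{\bc}$ on a module category built from a suitable Levi (where the associator can be determined by explicit recursion or cohomology) against the actions predicted by the two candidate categories, thereby forcing $\omega \neq 0$ and establishing the claimed equivalence.
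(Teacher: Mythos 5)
There is a genuine gap. Your reduction of the theorem to a single sign in $H^3(\BZ/2\BZ,\K^\times)$ is the right framing (and matches the paper's Remark (ii)), but you never actually compute the sign: you list two candidate strategies, note that ``the main obstacle is executing one of these computations,'' and end with speculation about cuspidal local systems and Levi subgroups. The first strategy (direct computation of the associator of truncated convolution on $\delta*\delta*\delta$) is exactly what the paper declares to be difficult and avoids. The second strategy (detect a ``semion'' in $\Z(\cP^\bc)$) is in the right spirit, but the whole content of the theorem is the mechanism by which one detects it, and that is missing from your proposal. The paper's mechanism is: the equivariantization functor $\Gamma^\bc$ from $\cP^\bc$ to the category $\cP_G^\bc$ of unipotent character sheaves in the cell is a \emph{commutator functor}, its canonical automorphism is the geometric automorphism $\Theta$ (given on stalks by $ad(x):\F_x\to\F_x$), and Shoji's explicit results on character sheaves for $E_7$ and $E_8$ show that $\Theta$ acts by $\pm\sqrt{-1}$ on two of the four character sheaves attached to $\bc$; since for any commutator functor out of $\Vec_{\BZ/2\BZ}$ admitting a left adjoint the canonical automorphism squares to the identity (the twists in $\Z(\Vec_{\BZ/2\BZ})$ are $\pm 1$), the order-$4$ automorphism rules out the untwisted category. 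Without this input (or an equivalent one) your argument does not force $\omega\neq 0$.

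A secondary issue: your reduction step treats $\cP^\bc$ as if it had a simple unit and a pointed subcategory $\mE$ ``generated by the invertible simple objects.'' In fact $\cP^\bc$ is a multi-fusion category whose unit $\be_\bc$ decomposes into many orthogonal idempotents $\be_i$, so there is no such global pointed subcategory; one must first pass to a corner $\be_i\ot\cP^\bc\ot\be_i$, which by Lusztig's computation of $J_\bc$ has two simples $\be_i,\delta_i$ with $\delta_i\ot\delta_i\simeq\be_i$, and only then invoke S\'inh's classification. Likewise, your appeal to ``cohomological rigidity'' of $\Coh(Y'\times Y')$ to split off a tensor factor is not justified as stated; the paper instead recovers $\cP^\bc$ from the corner via module-category duality, $\cP^\bc\simeq \Fun_{\be_i\ot\cP^\bc\ot\be_i}(\be_i\ot\cP^\bc,\be_i\ot\cP^\bc)$, together with the fact that $\Vec_{\BZ/2\BZ}^\omega$ has a unique indecomposable semisimple module category. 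These structural points are repairable, but as written they are gaps on top of the missing sign computation.
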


\begin{remark} (i) It will be clear from the proof that the category 
$\Vec_{\BZ/2\BZ}^\omega \bt \Coh(Y'\times Y')$ is not equivalent to
$\Vec_{\BZ/2\BZ} \bt \Coh(Y'\times Y')$.

(ii) Using the methods from \cite{BFO} one can show that for an exceptional two sided cell 
$\bc$ one has
either $\cP^{\bc}\simeq \Vec_{\BZ/2\BZ} \bt \Coh(Y'\times Y')$ or
$\cP^{\bc}\simeq \Vec_{\BZ/2\BZ}^\omega \bt \Coh(Y'\times Y')$. Thus our main
result is just a computation of the sign in the associativity constraint.

(iii) Lusztig's construction of the category $\cP^\bc$ and our arguments extend with trivial
changes to the settings of $D$-modules ($G$ is defined over an algebraically closed 
field of characteristic zero) or $\ell$-adic sheaves ($G$ defined over an
algebraically closed field in which $\ell$ is invertible).

(iv) There is an alternative definition of the category $\cP^\bc$
based on idea of A.~Joseph. Namely, it is proved in \cite[Corollary 4.5(b)]{BFO2} that 
($D$-module counterpart of) the category $\cP^\bc$ is tensor equivalent to certain 
subquotient of the category of Harish-Chandra bimodules with tensor product coming
from the usual tensor product of bimodules.

(v) The arguments in the proof of Theorem \ref{main} extend to the case of monodromic 
sheaves on the flag variety as in \cite[\S 5]{BFO}. Thus we get a proof of statement in
\cite[Remark 5.7]{BFO}.
\end{remark}

A direct computation of the associativity constraint in the category $\cP^\bc$ seems to be
difficult. Thus our proof of Theorem \ref{main} is indirect. Our main tool is the theory of
(unipotent) {\em character sheaves} developed by Lusztig \cite{Lu}. We use 
a {\em commutator functor} (see \cite[\S 6]{BFO}) from $\cP^\bc$ to the category of sheaves 
on the group $G$ (values of this functor are direct sums of character sheaves on $G$).
Our main observation is that a commutator functor carries a canonical automorphism and
keeping track of the order of this automorphism is 
sufficient in order to distinguish between categories  
$\Vec_{\BZ/2\BZ}$ and $\Vec_{\BZ/2\BZ}^\omega$.

\section{Proofs}

\subsection{Commutator functors}
Let $\C$ be a monoidal category with associativity isomorphism $a$ and let $\A$ be a category.

\begin{definition}{\em (\cite[\S 6]{BFO})} A {\em commutator functor} $F: \C \to \A$ is a functor
endowed with a natural isomorphism $u_{X,Y}: F(X\ot Y)\simeq F(Y\ot X)$ such that the following  
diagram commutes:
$$
\xymatrix{
&F(X\ot (Y\ot Z))\ar^{u_{X,Y\ot Z}}[dr]&\\
F((X\ot Y)\ot Z)\ar^{F(a_{X,Y,Z})}[ur]\ar^{u_{X\ot Y,Z}}[d]&&F((Y\ot Z)\ot X))\ar^{F(a_{Y,Z,X})}[d]\\
F(Z\ot (X\ot Y))\ar^{F(a^{-1}_{Z,X,Y})}[dr]&&F(Y\ot (Z\ot X))\\
&F((Z\ot X)\ot Y)\ar^{u_{Z\ot X,Y}}[ur]&}
$$
\end{definition}

\begin{remark} It follows immediately from definition that $u_{X,\be}: F(X)\to F(X)$ is an idempotent
and hence an identity map. 
\end{remark} 

\begin{definition} The map $u_{\be,X}: F(X)\to F(X)$ is called {\em canonical automorphism}
of the commutator functor $F$.
\end{definition}

\begin{remark} \label{twice}
It follows from definition that $u_{Y,X}\circ u_{X,Y}=u_{\be, X\ot Y}$.
\end{remark}

Recall that a {\em central functor} $G: \A \to \C$ is a functor endowed with functorial isomorphism
$v_{A,X}: G(A)\ot X\simeq X\ot G(A)$ such that the following diagram commutes:
$$
\xymatrix{
&G(A)\ot (X\ot Y)\ar^{v_{A,X\ot Y}}[dr]&\\
(G(A)\ot X)\ot Y\ar^{a_{G(A),X,Y}}[ur]\ar^{v_{A,X}\ot \id_Y}[d]&&(X\ot Y)\ot G(A)\ar^{a_{X,Y,G(A)}}[d]\\
(X\ot G(A))\ot Y\ar^{a_{X,G(A),Y}}[dr]&&X\ot (Y\ot G(A))\\
&X\ot (G(A)\ot Y)\ar^{\id_X\ot v_{A,Y}}[ur]&}
$$

Equivalently, the structure of central functor on $G$ is the same as factorization
$\A \to \Z(\C)\to \C$ where $\Z(\C)$ is the Drinfeld center of $\C$ and $\Z(\C)\to \C$ is the
forgetful functor, see e.g. \cite[\S 2.3]{ENO}.

\begin{proposition} \label{commcent} {\em (\cite[\S 6]{BFO})}
Assume that the category $\C$ is rigid and has a pivotal structure (that is an isomorphism
of tensor functors $\mbox{Id} \to \phantom{}^{**}$). Let $(G,F)$ be an adjoint pair of
functors between $\C$ and $\A$. Then the structures of commutator functor on $F$ are
in natural bijection with structures of central functor on $G$.
\end{proposition}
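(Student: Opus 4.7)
The plan is to translate both kinds of structure into natural transformations between suitable Hom-functors via the adjunction, then apply Yoneda's lemma. Suppose $G \dashv F$, so that $\Hom_\C(G(A), Z) \simeq \Hom_\A(A, F(Z))$ naturally in $A, Z$. By Yoneda, a natural isomorphism $u_{X, Y}: F(X \ot Y) \to F(Y \ot X)$ is the same datum as a family of natural isomorphisms $\Hom_\C(G(A), X \ot Y) \simeq \Hom_\C(G(A), Y \ot X)$, functorial in $A, X, Y$, so I will work at the level of these Hom-sets.

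From a central structure $v_{A, Z}: G(A) \ot Z \to Z \ot G(A)$ on $G$, I define the desired Hom-isomorphism as the composite
\[
\Hom(G(A), X \ot Y) \simeq \Hom(X^* \ot G(A), Y) \stackrel{\circ\, v_{A, X^*}}{\simeq} \Hom(G(A) \ot X^*, Y) \simeq \Hom(G(A), Y \ot X^{**}) \simeq \Hom(G(A), Y \ot X),
\]
where the first and third isomorphisms are rigidity adjunctions (canonical in any rigid category), the last uses the pivotal identification $X \simeq X^{**}$, and the middle one is precomposition with $v_{A, X^*}$. By Yoneda this produces $u_{X, Y}$. Conversely, given a commutator structure $u$ on $F$, the half-braiding $v_{A, X}: G(A) \ot X \to X \ot G(A)$ is recovered by rewriting $\Hom_\C(G(A) \ot X, X \ot G(A)) \simeq \Hom_\A(A, F(X \ot G(A) \ot X^*))$ and producing the required element from the unit $\eta_A: A \to FG(A)$ together with $F$ applied to coevaluation maps, the commutator isomorphism $u$, and the pivotal identification.

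What remains is to verify that (a) the hexagon for $u$ follows from the hexagon for $v$; (b) symmetrically, the hexagon for $v$ follows from that of $u$ in the reverse direction; and (c) the two assignments are mutually inverse. Item (c) reduces to the triangle identities of the adjunction and is essentially formal. The substance is in (a) and (b): each hexagon translates, via the Hom-set dictionary above, into the hexagon for the other structure. This is a diagram chase through associators, evaluations, coevaluations, and the pivotal identification.

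The main obstacle is this coherence verification. While the construction is canonical at the level of Hom-sets, matching the two hexagons up requires careful bookkeeping of left/right duals and of the pivotal isomorphism; pivotality is precisely what makes the identifications compatible on both legs of the hexagon, and is what distinguishes this bijection from a merely ``half'' correspondence. Once this is established, the statements that $u_{X, \be}$ is the identity and that $u_{Y,X}\circ u_{X,Y} = u_{\be, X\ot Y}$ (recorded in the remarks above) are immediate consequences.
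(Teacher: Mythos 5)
Your construction is essentially the paper's own proof: both directions are obtained by conjugating the adjunction isomorphism with rigidity adjunctions, the half-braiding $v$ (resp.\ the commutator isomorphism $u$), and the pivotal identification with the double dual, and then invoking Yoneda; the paper likewise only asserts, without carrying out, the hexagon verifications and the check that the two assignments are mutually inverse. The only differences are cosmetic (you dualize $X$ where the paper dualizes $Y$, and in the converse direction you build the element of $\Hom(G(A)\ot X, X\ot G(A))$ explicitly rather than via the corepresented-functor chain), so the proposal is correct and matches the paper's approach.
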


\begin{proof} Assume that $G$ has a central structure. Then
$$
\Hom(?,F(X\ot Y))=\Hom(G(?),X\ot Y)=\Hom(G(?)\ot \phantom{}^*Y,X)
$$
$$
=\Hom(\phantom{}^*Y\ot G(?),X)=\Hom(G(?), \phantom{}^{**}Y\ot X)=\Hom(?, F(Y\ot X))
$$
and it is straightforward to check that the resulting isomorphism $F(X\ot Y)\simeq F(Y\ot X)$
satisfies the definition of commutator functor.

Conversely, assume that $F$ has a commutator structure. Then 
$$\Hom(G(A)\ot X,?)=\Hom(G(A), ?\ot X^*)=\Hom(A, F(?\ot X^*))$$
$$=\Hom(A,F(X^*\ot?))=\Hom(G(A), X^*\ot ?)=\Hom(X\ot G(A), ?)$$
and it is straightforward to check that the resulting isomorphism $G(A)\ot X\simeq X\ot G(A)$
satisfies the definition of central functor.

Finally, one verifies that the two constructions above are mutually inverse.
\end{proof}

Let $\C$ be a pivotal category.
Assume that the forgetful functor $\Z(\C)\to \C$ has a right adjoint functor $\Ind: \C \to \Z(\C)$.
It follows from Proposition \ref{commcent} that the functor $\Ind$ has a canonical structure 
of commutator functor. Clearly, for any functor $\Z(\C)\to \A$ the composition 
$\C \xrightarrow{\Ind} \Z(\C)\to \A$ has a structure of commutator functor. Moreover, we have the following
universal property:

\begin{corollary} Let $F: \C \to \A$ be a commutator functor such that left adjoint of $F$ exists.
Then $F$ factorizes as $\C \xrightarrow{\Ind} \Z(\C)\to \A$. $\square$
\end{corollary}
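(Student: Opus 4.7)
The plan is to deduce the corollary purely formally from Proposition \ref{commcent} together with the adjunction $U \dashv \Ind$ between the forgetful functor $U: \Z(\C) \to \C$ and induction.

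First, I would apply Proposition \ref{commcent} to the adjoint pair $(G, F)$, where $G$ denotes the given left adjoint of $F$: the commutator structure on $F$ translates into a central structure on $G$. By the equivalence recorded just before the proposition (a central structure on a functor into $\C$ is the same data as a factorization through the forgetful functor from $\Z(\C)$), this means $G = U \circ \tilde G$ for some functor $\tilde G: \A \to \Z(\C)$.

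Next, I would pass to right adjoints. Since $U$ has right adjoint $\Ind$, the composite $G = U \circ \tilde G$ has right adjoint $\tilde G^R \circ \Ind$ whenever $\tilde G$ itself admits a right adjoint $\tilde G^R$, and by uniqueness of adjoints this coincides with $F$. The resulting identification $F \simeq \tilde G^R \circ \Ind$ is exactly the factorization asserted by the corollary. The same conclusion can also be read off directly from the chain of natural isomorphisms
$$
\Hom_\A(A, F(X)) = \Hom_\C(G(A), X) = \Hom_\C(U \tilde G(A), X) = \Hom_{\Z(\C)}(\tilde G(A), \Ind(X)),
$$
which determines the factorizing functor $\Z(\C) \to \A$ on objects of the form $\Ind(X)$.

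The only technical point is the existence of $\tilde G^R$, but I do not expect this to be a serious obstacle: in the finitely semisimple categories that arise in the paper's applications all functors between them admit adjoints automatically, and the formal argument above goes through verbatim. If one wants a statement valid in greater generality, one can bypass the question by reading the factorizing functor directly off of the displayed chain of adjunctions without ever naming $\tilde G^R$ as a separate object.
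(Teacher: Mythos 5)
Your argument is correct and is essentially the intended one: the paper leaves this corollary unproved (it is stated with a $\square$), and the evident reasoning is exactly yours — apply Proposition \ref{commcent} to the left adjoint $G$ to get a central structure, factor $G$ through the forgetful functor $\Z(\C)\to\C$, and pass to right adjoints using $\Ind$. Your caveat about the existence of the right adjoint of $\tilde G$ is a fair point of care (in full generality one would need it, or a Kan-extension argument, to define the factorizing functor on all of $\Z(\C)$), but it is automatic in the semisimple settings where the corollary is used, so the proposal matches the paper's intent.
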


Let $\D$ be a ribbon category, for example Drinfeld center $\Z(\C)$ of a spherical category $\C$.
Recall that there is a canonical automorphism $\theta$ of the identity functor called {\em twist}
defined as following composition:
$$ X\to \phantom{}^{**}X\ot \phantom{}^*X\ot X\to \phantom{}^{**}X\ot X\ot \phantom{}^*X\to 
\phantom{}^{**}X=X.$$
One can rewrite this definition in terms of functor represented by $X$ as follows:
$$\Hom(?,X)=\Hom(?\ot \phantom{}^*X, \be)=\Hom(\phantom{}^*X\ot ?,\be)=
\Hom(?, \phantom{}^{**}X).$$
Thus we have the following

\begin{corollary} The canonical automorphism of the commutator functor $\Ind: \C \to \Z(\C)$
equals the twist automorphism. $\square$
\end{corollary}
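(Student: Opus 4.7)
The strategy is to show that both automorphisms of $\Ind(Y)$ correspond, via the Yoneda lemma, to the same natural transformation of the representable functor $\Hom_{\Z(\C)}(-, \Ind(Y))$.

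By the reformulation of the twist given just before the corollary, $\theta_{\Ind(Y)}$ corresponds to the composite
$\Hom_{\Z(\C)}(A, \Ind(Y)) \simeq \Hom_{\Z(\C)}(A \ot {}^*\Ind(Y), \be) \simeq \Hom_{\Z(\C)}({}^*\Ind(Y) \ot A, \be) \simeq \Hom_{\Z(\C)}(A, \Ind(Y))$,
where the middle isomorphism is induced by the braiding in $\Z(\C)$ and the outer ones combine duality with the pivotal structure. Unwinding the proof of Proposition \ref{commcent} for the adjoint pair consisting of the forgetful functor $L \colon \Z(\C) \to \C$ (equipped with its tautological central structure given by the half-braiding) and its right adjoint $\Ind$, the canonical automorphism $u_{\be, Y}$ corresponds to the analogous composite
$\Hom_\C(L(A), Y) \simeq \Hom_\C(L(A) \ot {}^*Y, \be) \simeq \Hom_\C({}^*Y \ot L(A), \be) \simeq \Hom_\C(L(A), Y)$,
performed entirely inside $\C$, with the middle isomorphism induced by the central structure $v_{A, {}^*Y}$. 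The adjunction $L \dashv \Ind$ identifies $\Hom_{\Z(\C)}(A, \Ind(Y))$ with $\Hom_\C(L(A), Y)$.

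The two chains can then be matched term by term. The first duality in $\Z(\C)$ transports, via the adjunction and the fact that $L$ is a strict monoidal, duality-preserving functor, to the corresponding duality in $\C$. The braiding $\sigma_{A, {}^*\Ind(Y)}$ in $\Z(\C)$ is, by definition of the center, the half-braiding of $A$ at $L({}^*\Ind(Y))$, and under the counit of the adjunction this matches the half-braiding $v_{A, {}^*Y}$. Finally, the pivotal structure on $\Z(\C)$ is lifted from the one on $\C$, so the closing isomorphisms agree as well. Putting these identifications together, the two chains define the same natural endomorphism of $\Hom_{\Z(\C)}(-, \Ind(Y))$, and hence by Yoneda the same automorphism of $\Ind(Y)$.

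The main obstacle is the coherence check in the previous paragraph: one has to verify that ${}^*\Ind(Y) \in \Z(\C)$ corresponds to ${}^*Y \in \C$ in a way compatible with all the duality pairings and with the pivotal isomorphisms used in both constructions. This is a routine but somewhat tedious bookkeeping exercise. Modulo this, the corollary is a direct consequence of the parallel form of the two constructions: both measure the failure of ``swap via half-braiding followed by pivotal'' to be the identity, once on $\Ind(Y)$ in $\Z(\C)$ and once on $Y$ in $\C$.
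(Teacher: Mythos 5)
Your proposal is correct and follows essentially the same route as the paper, which treats the corollary as immediate from comparing the representable-functor description of the twist with the Hom-chain used in Proposition \ref{commcent} for the adjoint pair (forgetful functor, $\Ind$). The ``term by term'' matching you defer (compatibility of duals, half-braidings and the pivotal structure under the adjunction) is exactly the routine verification the paper also leaves implicit.
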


\begin{example} \label{C2}
(a) Let $\C =\Vec_{\BZ/2\BZ}$. It is well known that the category $\C$ is spherical
(in two different ways) and the twist of any simple object of $\Z(\C)$ is $\pm 1$ (for any choice
of the spherical structure). It follows that for any
commutator functor $F: \C \to \A$ such that left adjoint of $F$ exists the square of the canonical
automorphism is the identity map.

(b) Let $\C =\Vec_{\BZ/2\BZ}^\omega$. The category $\C$ has two spherical structures and 
the twists of simple objects of $\Z(\C)$ are 1 and $\pm \sqrt{-1}$. Thus there exists a
commutator functor $F: \C \to \A$ such that left adjoint of $F$ exists and the square of the canonical
automorphism is not the identity map, for example $F=\Ind$.
\end{example}

\subsection{Commutator functor $\Gamma$ and its canonical automorphism} Let $G$ be a simple algebraic group over an algebraically closed 
field and let $B \subset G$ be a Borel subgroup. The group $B\times B$ acts on $G$ via left and
right translations. We consider the subcategories $\C_{B\times B}$ and $\C_G$ of the bounded derived category of constructible sheaves on $G$ consisting of complexes that are isomorphic 
to a finite direct sum of shifts of simple, $B\times B$-equivariant (respectively, $G$-equivariant with respect to the adjoint action) perverse sheaves on $G$.

We recall now that
the category $\C_{B\times B}$ has a natural monoidal structure with tensor product given by
the convolution, see e.g. \cite[Section 1]{Lt}. The group $B$ acts freely on 
$G\times G$ as follows: $b\cdot (x,y)=
(xb,b^{-1}y)$. Let $G\times_BG$ denote the quotient of $G\times G$ by this action and let
$\pi: G\times G\to G\times_BG$ be the natural projection. Clearly, the multiplication map
$m(x,y)=xy$ is a well defined map $m:G\times_BG\to G$. For $\F_1, \F_2\in \C_{B\times B}$
let $\widetilde{\F_1\bt \F_2}$ be the unique complex of sheaves on $G\times_BG$ such that
$\pi^*\widetilde{\F_1\bt \F_2}=\F_1\bt \F_2$. By definition, the convolution of $\F_1$ and $\F_2$
is $\F_1*\F_2:=m_!\widetilde{\F_1\bt \F_2}$. It follows from the Decomposition Theorem 
(see \cite{BBD}) that
$\F_1*\F_2\in \C_{B\times B}$ and it is clear that the convolution is a bifunctor. 

Let  $G\times_BG\times_BG$ be the
quotient of $G\times G \times G$ by the free $B\times B$-action via $(b_1,b_2)\cdot (x,y,z)=(xb_1,b_1^{-1}yb_2,b_2^{-1}z)$ with obvious projection $\pi_2: G\times G\times G\to G\times_BG\times_BG$ and multiplication map $m_2: G\times_BG\times_BG\to G$, $m_2(x,y,z)=xyz$.
For $\F_1, \F_2, \F_3\in \C_{B\times B}$ the convolutions $(\F_1*\F_2)*\F_3$ and $\F_1*(\F_2*\F_3)$ are both
canonically isomorphic to $(m_2)_!(\widetilde{\F_1\bt \F_2\bt \F_3})$ where 
$\widetilde{\F_1\bt \F_2\bt \F_3}$ is the unique complex on $G\times_BG\times_BG$
such that $\pi_2^*(\widetilde{\F_1\bt \F_2\bt \F_3})=\F_1\bt \F_2\bt \F_3$. 
Identifying 
$(\F_1*\F_2)*\F_3$ and $\F_1*(\F_2*\F_3)$ via these isomorphisms we endow the category
$\C_{B\times B}$ with associativity constraint satisfying the pentagon axiom. Moreover, 
$\C_{B\times B}$ is a monoidal category with unit object given by the constant sheaf on 
$B\subset G$.

\begin{remark} The indecomposable objects of the category $\C_{B\times B}$ are
$IC_w[i]$ where $IC_w$ is the intersection cohomology complex (see \cite{BBD}) 
of the Bruhat cell corresponding to $w\in W$ and $[i]$ stands for the shift.
It is well known (see e.g. \cite[\S 1.4]{Lt}) that the (split) Grothendieck ring of the category 
$\C_{B\times B}$ identifies with {\em Hecke algebra}; under this identification
the objects $IC_w[i]$ correspond to the elements of Kazhdan-Lusztig basis multiplied
by $i$-th power of the Hecke algebra parameter.
\end{remark}

There is an equivariantization functor $\Gamma: \C_{B\times B}\to \C_G$ defined as follows
(see \cite[Section 1]{MV}).
Let $G\tilde \times_BG$ be the quotient of $G\times G$ by the following free $B$-action:
$b\odot (g,x)=(b^{-1}g,b^{-1}xb)$. We have a canonical projection 
$\tilde \pi: G\times G\to G\tilde \times_BG$ and
the adjoint action map $a: G\tilde \times_BG\to G$, $a(g,x)=g^{-1}xg$.
For $\F \in \C_{B\times B}$ let $\bar \F$ be a unique complex on $G\tilde \times_BG$ such that
$\tilde \pi^*\bar \F=K\bt \F$. We set $\Gamma(\F):=a_!(\bar \F)$. It is clear that $\Gamma(\F)$ is
$G$-equivariant complex on $G$ (with respect to the adjoint action) and the Decomposition 
Theorem (see \cite{BBD}) implies that $\Gamma(\F)$ is semisimple. In other words, 
$\Gamma$ is a functor $\C_{B\times B}\to \C_G$.

\begin{remark} \label{chsh}
We recall that irreducible constituents of perverse cohomology of complexes
$\Gamma(\F)$ are by definition {\em character sheaves} (with trivial central character, or unipotent)
on $G$, see \cite[Sections 2, 11]{Lu} and \cite[Lemma 2.3]{MV}.
\end{remark}

We now define commutator structure on the functor $\Gamma$. Let $G\tilde \times_B(G\times_BG)$
be the quotient of $G\times G\times G$ with respect to the following free $B\times B$-action:
$(b_1,b_2)\cdot (g,x,y)=(b_1^{-1}g, b_1^{-1}xb_2, b_2^{-1}yb_1)$. We have an obvious
projection $\tilde \pi_2: G\times G\times G\to G\tilde \times_B(G\times_BG)$ and a well defined
map $a_2: G\tilde \times_B(G\times_BG)\to G$, $a_2(g,x,y)=g^{-1}xyg$. We have an obvious
isomorphism $\Gamma(\F_1*\F_2)=(a_2)_!(\overline{\F_1\bt \F_2})$ where 
$\overline{\F_1\bt \F_2}$ is a unique complex on $G\tilde \times_B(G\times_BG)$ such that
$\tilde \pi_2^*(\overline{\F_1\bt \F_2})=K\bt \F_1\bt \F_2$.

Consider the following (well defined) map 
$\rho: G\tilde \times_B(G\times_BG)\to G\tilde \times_B(G\times_BG)$, $\rho(g,x,y)=(yg,y,x)$.
For $\F_1, \F_2\in \C_{B\times B}$ there is a unique isomorphism $\tilde u_{\F_1,\F_2}:
\rho_!(\overline{\F_1\bt \F_2})\simeq \overline{\F_2\bt \F_1}$ inducing the identity map 
in every stalk (clearly, the stalks of both $\rho_!(\overline{\F_1\bt \F_2})$ and 
$\overline{\F_2\bt \F_1}$ at $(g,x,y)\in G\tilde \times_B(G\times_BG)$ are canonically
isomorphic to $(\F_2)_x\ot (\F_1)_y$). Now observe that $a_2\circ \rho =a_2$. Thus
we have an isomorphism $u_{\F_1, \F_2}: \Gamma(\F_1*\F_2)\simeq \Gamma(\F_2*\F_1)$
defined as composition
$$\Gamma(\F_1*\F_2)=(a_2)_!(\overline{\F_1\bt \F_2})=(a_2\circ \rho)_!(\overline{\F_1\bt \F_2})
\xrightarrow{(a_2)_!(\tilde u_{\F_1, \F_2})} (a_2)_!(\overline{\F_2\bt \F_1})=\Gamma(\F_2*\F_1).$$ 

The following result is straightforward:

\begin{proposition} The functor $\Gamma$ together with isomorphism $u_{\bullet,\bullet}$ is
a commutator functor $\C_{B\times B}\to \C_G$. $\square$
\end{proposition}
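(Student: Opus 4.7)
The plan is to verify the hexagon axiom by promoting the binary geometric picture to a ternary one in which both sides of the hexagon become the same geometric rotation covering the identity of $G$.

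First, following the recipe used for $a_2$ in the paper, I would introduce the quotient $\widetilde X_3 := G\tilde\times_B(G\times_BG\times_BG)$ of $G^{\times 4}$ by the free $B^{\times 3}$-action $(b_1,b_2,b_3)\cdot(g,x,y,z) = (b_1^{-1}g,\, b_1^{-1}xb_2,\, b_2^{-1}yb_3,\, b_3^{-1}zb_1)$, together with the projection $\widetilde\pi_3 : G^{\times 4}\to\widetilde X_3$ and the adjoint map $a_3(g,x,y,z) = g^{-1}xyzg$. For $\F_1,\F_2,\F_3\in\C_{B\times B}$ let $\overline{\F_1\bt\F_2\bt\F_3}$ be the unique complex on $\widetilde X_3$ with $\widetilde\pi_3^*(\overline{\F_1\bt\F_2\bt\F_3}) = K\bt\F_1\bt\F_2\bt\F_3$. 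Repeating the base-change argument used for $m_2$ in the paper, both $\Gamma((\F_1*\F_2)*\F_3)$ and $\Gamma(\F_1*(\F_2*\F_3))$ are canonically isomorphic to $(a_3)_!(\overline{\F_1\bt\F_2\bt\F_3})$, and under these identifications each associativity isomorphism $\Gamma(a_{X,Y,Z})$ becomes the identity map.

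Next, I would lift each of the three binary rotations $\rho$ that appear as $u$-arrows of the hexagon to a self-map of $\widetilde X_3$. Each lift is pinned down by requiring equivariance for the $B^{\times 3}$-actions (so that it descends to the quotient), the identity $a_3\circ(\text{lift}) = a_3$, and the condition that the induced map on stalks of the $\overline{\cdots}$ complexes realizes the appropriate permutation of tensor factors. Explicitly, one obtains $\rho^{\mathrm{top}}(g,x,y,z) = (yzg,y,z,x)$ for $u_{X,Y\ot Z}$, $\rho'(g,x,y,z) = (zg,z,x,y)$ for $u_{X\ot Y,Z}$, and $\rho''(g,z,x,y) = (yg,y,z,x)$ for $u_{Z\ot X,Y}$. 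Pushing each lift forward along $a_3$ reproduces the binary construction of $u$ from the paper, so the hexagon at the level of $\Gamma$ reduces to comparing the top-path lift $\rho^{\mathrm{top}}$ with the bottom-path composite $\rho''\circ\rho'$; a direct computation yields $\rho''\circ\rho'(g,x,y,z) = \rho''(zg,z,x,y) = (yzg,y,z,x) = \rho^{\mathrm{top}}(g,x,y,z)$, and both lifts are designed to induce the same stalkwise identification $\overline{\F_1\bt\F_2\bt\F_3}\simeq\overline{\F_2\bt\F_3\bt\F_1}$, so the hexagon commutes.

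The main obstacle is bookkeeping: one must track several distinct free $B$-actions on products of copies of $G$, verify the required equivariance and the identity $a_3\circ\rho = a_3$ for every lift, and apply proper base change repeatedly to pass between the binary and ternary realizations of $\Gamma$. None of these checks is deep, but they must be carried out systematically; once in place, the hexagon reduces to the manifest geometric identity $\rho^{\mathrm{top}} = \rho''\circ\rho'$.
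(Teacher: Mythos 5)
Your proposal is correct and is essentially the verification the paper leaves to the reader (the paper simply declares the proposition straightforward and gives no proof): the key formulas check out, namely $a_3\circ\rho^{\mathrm{top}}=a_3\circ\rho'=a_3$, the lifts descend to the quotient because they intertwine the $B^{\times 3}$-action up to a cyclic relabeling of the factors, and indeed $\rho''\circ\rho'=\rho^{\mathrm{top}}$, so both paths of the hexagon are induced by the same geometric map with the same stalkwise (identity) identification. The remaining steps you flag -- proper base change to identify $\Gamma((\F_1*\F_2)*\F_3)$ and $\Gamma(\F_1*(\F_2*\F_3))$ with $(a_3)_!(\overline{\F_1\bt\F_2\bt\F_3})$ so that the associators become identities, and compatibility of the binary $\tilde u$ with the ternary lifts -- are exactly the routine bookkeeping the paper's ``straightforward'' is meant to cover.
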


There is a canonical automorphism $\Theta$ of the identity functor of the category $\C_G$ defined 
as follows (see \cite[Definition 2.1]{Ocusp}): for any $\F \in \C_G$ by definition of equivariance 
there is an isomorphism $ad^*(\F)\simeq p^*(\F)$ where $p: G\times G\to G$ is the second 
projection and $ad: G\times G\to G$ is the adjoint action map, $ad(g,x)=g^{-1}xg$; taking
pullback of this isomorphism with respect to the diagonal map $\Delta: G\to G\times G$ and
noticing that $\Delta \circ ad=\Delta \circ p=\text{Id}$ we get an automorphism $\Theta_\F :\F \to \F$.
In other words, the automorphism $\Theta_\F$ at the stalk $\F_x$ is precisely $ad(x): \F_x\simeq
\F_{x^{-1}xx}$. We have the following 

\begin{proposition} \label{theta}
The canonical automorphism of the commutator functor $\Gamma$ equals $\Theta$, that is
$u_{\be,\F}=\Theta_{\Gamma(\F)}$. $\square$
\end{proposition}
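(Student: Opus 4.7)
The plan is to verify the identity $u_{\be,\F}=\Theta_{\Gamma(\F)}$ by direct stalk-by-stalk computation, realizing both automorphisms as endomaps of the fiber $a^{-1}(z)\subset G\tilde\times_B G$ for each $z\in G$, and checking that these endomaps agree. Throughout, I write $\Gamma(\F)=a_!\bar\F$ and recall that the stalk of $\bar\F$ at $(g,y)$ is $\F_y$, independent of the $g$-coordinate.

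First I would make explicit the canonical unit identifications. The sheaf $\overline{\be\bt\F}$ is supported on the locus $\{x\in B\}\subset G\tilde\times_B(G\times_BG)$; using the free $B\times B$-action one normalizes $x=1$, and the remaining diagonal stabilizer matches the $B$-action defining $G\tilde\times_B G$, yielding an identification $[g,1,y]\leftrightarrow[g,y]$ under which $\overline{\be\bt\F}$ becomes $\bar\F$ and $a_2$ restricts to $a$. Symmetrically, $\overline{\F\bt\be}$ lives on $\{y\in B\}$ and is identified with $\bar\F$ via $[g,x,1]\leftrightarrow[g,x]$. Under these two identifications, the map $\rho(g,x,y)=(yg,y,x)$ carries the source region to the target region via $(g,1,y)\mapsto(yg,y,1)$, which on $G\tilde\times_B G$ is the self-map $\phi(g,y)=(yg,y)$. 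Because $\be$ has one-dimensional stalk at $1$, the isomorphism $\tilde u$ is the identity at every stalk, so $u_{\be,\F}$ equals $a_!$ applied to the canonical stalk-identity isomorphism $\phi_!\bar\F\simeq\bar\F$ (this uses that $\bar\F$ only depends on the $y$-coordinate, which $\phi$ preserves). Restricted to a fiber $a^{-1}(z)$, the relation $g^{-1}yg=z$ gives $y=gzg^{-1}$, hence $yg=gz$, so $\phi$ becomes the endomap $(g,y)\mapsto(gz,y)$ of $a^{-1}(z)$.

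Next I would identify $\Theta_{\Gamma(\F)}$. The $G$-equivariant structure on $\Gamma(\F)=a_!\bar\F$ comes from the $G$-action $g_0\cdot(g,y)=(gg_0,y)$ on $G\tilde\times_B G$, for which $a$ intertwines with the adjoint action $ad(g_0,z)=g_0^{-1}zg_0$ on $G$ (indeed $a(gg_0,y)=g_0^{-1}(g^{-1}yg)g_0$). By proper base change along the Cartesian square associated with these actions, the equivariance isomorphism $\mu_{(g_0,z)}:\Gamma(\F)_{g_0^{-1}zg_0}\simeq\Gamma(\F)_z$ is induced by the bijection $a^{-1}(g_0^{-1}zg_0)\to a^{-1}(z)$ coming from translation by $g_0$. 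Specializing to $(g_0,z)=(z,z)$, as demanded by $\Theta_{\Gamma(\F),z}=\mu_{(z,z)}$, this bijection is the self-map $(g,y)\mapsto(gz,y)$ of $a^{-1}(z)$, matching $\phi$ on the nose.

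With both sides computed as the same self-map of $a^{-1}(z)$, acting by the identity on stalks of $\bar\F$ (which depend only on $y$), the equality $u_{\be,\F}=\Theta_{\Gamma(\F)}$ follows at every stalk, hence globally. The main obstacle is purely bookkeeping: keeping left/right translation and the direction of the equivariance isomorphism consistent so that the two self-maps agree rather than being inverses. Everything else is essentially formal, which is why the proposition is stated as ``straightforward''; the conceptual content is simply that the geometric origin of $\rho$ in the commutator structure of $\Gamma$ is exactly the adjoint self-action of the point $z$ on the fiber over $z$.
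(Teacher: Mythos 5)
Your proof is correct and is exactly the argument the paper has in mind: the paper states the proposition without proof precisely because the explicit construction of $u$ via $\rho$ reduces it to the fiberwise identity you verify, namely that on $a^{-1}(z)$ one has $y=gzg^{-1}$, so $\rho$ restricted to the support of $\overline{\be\bt\F}$ becomes the self-map $(g,y)\mapsto (gz,y)$, which is the action of the group element $z$ that computes $\Theta$ on the stalk at $z$, both maps being the identity on stalks of $\bar\F$. The only delicate point is the direction bookkeeping you flag yourself (as written, ``translation by $g_0$'' is a bijection $a^{-1}(z)\to a^{-1}(g_0^{-1}zg_0)$, not the other way around, so your $\mu_{(g_0,z)}$ is induced contravariantly by it); this is a convention the paper likewise leaves unspecified, it affects at most $\Theta$ versus $\Theta^{-1}$, and with the natural compatible choices your identification stands, so the slip is immaterial.
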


\begin{remark} Our construction of commutator structure on the functor $\Gamma$ is just a
more explicit version of construction in \cite[\S 6]{BFO} (we will not need this fact in what follows).
The advantage of the present version is that it makes Proposition \ref{theta} easy.
\end{remark}

\subsection{Truncated convolution and equivariantization} \label{trunk}
We recall (see e.g. \cite[Chapter 5]{Lb}) that the Weyl group $W$ is partitioned into 
two sided cells, $W=\sqcup \bc$.
Remind that there is a partial order $\le_{LR}$ on the set of two sided cells.
Let $a(\bc)$ denote the common value of Lusztig's $a$-function on $w\in W$, see \cite[\S 2.3]{Lt}.

Let $\cP_{B\times B}$ denote the full subcategory of $\C_{B\times B}$
consisting of perverse sheaves.
Let $\cP_{B\times B}^{\bc}$ denote the full subcategory of $\cP_{B\times B}$ consisting of direct
sums of perverse sheaves $IC_w$, $w\in \bc$ and let $pr^{\bc}: \cP_{B\times B}\to \cP_{B\times B}^{\bc}$ be the obvious projection functor. Let $\phantom{}^pH$ denote the perverse cohomology 
functor. Consider the following bifunctor
$\cP_{B\times B}^{\bc}\times \cP_{B\times B}^{\bc}\to \cP_{B\times B}^{\bc}$:
$$\F_1\st \F_2:=pr^{\bc}(\phantom{}^pH^{a(\bc)}(\F_1*\F_2)).$$
It is explained in \cite[Section 2]{Lt} that the associativity constraint of the convolution category $\C_{B\times B}$
restricts to the associativity constraint for the bifunctor $\st$. Moreover, there exists the unit
object $\be_{\bc} \in \cP_{B\times B}^{\bc}$ (see \cite[\S 2.9]{Lt}), so the category 
$\cP_{B\times B}^{\bc}$ has a monoidal
structure. We have the following result

\begin{proposition} {\em (\cite[p. 222]{BFO})}
The monoidal category $\cP_{B\times B}^{\bc}$ is rigid. Hence $\cP_{B\times B}^{\bc}$ is a
multi-fusion category in a sense of \cite[\S 2.4]{ENO}. $\square$
\end{proposition}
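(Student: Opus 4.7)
My plan is to reduce the proposition to rigidity, since the category $\cP_{B\times B}^{\bc}$ is by construction $\K$-linear semisimple with finitely many simple objects $\{IC_w\mid w\in\bc\}$, with finite-dimensional hom spaces, and its unit $\be_{\bc}$ exists by \cite[\S 2.9]{Lt}; multi-fusion in the sense of \cite[\S 2.4]{ENO} then follows automatically from rigidity.

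First, I would construct a monoidal contravariant auto-equivalence $D$ of $\cP_{B\times B}^{\bc}$. The natural candidate is $D(\F):=\iota^*\mathbb{D}(\F)$, where $\iota:G\to G$ is the inversion $g\mapsto g^{-1}$ and $\mathbb{D}$ is Verdier duality. Since $\iota$ sends $BwB$ to $Bw^{-1}B$ preserving dimensions, $D$ sends $IC_w$ to $IC_{w^{-1}}$. The fact that $\iota$ is a group anti-automorphism, together with the interchange of $m_!$ and $m_*$ under $\mathbb{D}$ (which agree up to canonical isomorphism on the pure semisimple complexes $\widetilde{\F_1\bt\F_2}$), makes $D$ a monoidal anti-equivalence on $\C_{B\times B}$. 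Compatibility with the top perverse cohomology $\phantom{}^pH^{a(\bc)}(-)$ is automatic from $a(w)=a(w^{-1})$, so $D$ descends to an anti-equivalence of $\cP_{B\times B}^{\bc}$. Thus $IC_{w^{-1}}$ is the natural candidate dual of $IC_w$.

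Next, I would produce evaluation and coevaluation maps using Lusztig's asymptotic Hecke algebra $J_{\bc}$. For each left cell $\Gamma\subset\bc$ there is a unique Duflo involution $d_\Gamma\in\Gamma$, and for $w\in\Gamma$ Lusztig's results give $t_w\cdot t_{w^{-1}}=t_{d_\Gamma}+\cdots$ in $J_{\bc}$, with $t_{d_\Gamma}$ appearing as a summand of the class of $\be_{\bc}$. Categorifying these identities produces nonzero morphisms $\coev_w:\be_{\bc}\to IC_w\st IC_{w^{-1}}$ and $\ev_w:IC_{w^{-1}}\st IC_w\to\be_{\bc}$, unique up to scalar.

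The hard part will be the triangle identities. I see two reasonable routes. The direct approach uses Lusztig's positivity properties of the $a$-function together with the identification of truncated convolution as the top perverse cohomology of ordinary convolution to reduce the triangle identity to a nonvanishing of a structure constant in $J_{\bc}$ guaranteed by the Duflo property; a judicious normalization of $\ev_w$ and $\coev_w$ is needed to turn this nonvanishing into the precise triangle equality. The cleaner conceptual route, which I would prefer, is to invoke the Harish-Chandra bimodule realization of $\cP^{\bc}$ from \cite[Cor.\ 4.5(b)]{BFO2}: there, duality is implemented by the manifestly rigid functor $\Hom_{U(\g)}(-,U(\g))$ on bimodules, and one then only needs to check that this rigidity is preserved by the subquotient construction — which is straightforward once one knows that $D$ above intertwines the bimodule duality.
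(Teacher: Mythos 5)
This proposition is not proved in the paper at all: it is quoted verbatim from \cite[p. 222]{BFO}, so the only meaningful comparison is with the argument there. Your opening moves do match the skeleton of that argument (the contravariant duality $D(\F)=\iota^*\mathbb{D}(\F)$ exchanging $IC_w$ and $IC_{w^{-1}}$, the summands of $\be_\bc$ being the $IC_d$ for Duflo involutions $d$, and candidate $\ev$/$\coev$ coming from $\gamma_{w,w^{-1},d}=1$ in $J_\bc$). But your proposal has a genuine gap exactly at the point you yourself flag as ``the hard part''. Nonvanishing of structure constants plus ``judicious normalization'' cannot yield the triangle identities: for a simple $X$ with simple candidate dual $Y$, the relevant Hom spaces are one-dimensional, and rescaling $\ev$ and $\coev$ multiplies the two snake composites by the \emph{same} factor, so their ratio is an invariant of the category; one must prove the two scalars are equal and nonzero, not merely nonzero. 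The standard way to do this is to show that $\ev$ induces a perfect pairing, i.e.\ that $-\st D\F$ is genuinely adjoint to $-\st\F$ (then the unit of the adjunction is the coevaluation and both triangles hold by definition of adjunction). That adjunction is where the real content lies: it uses properness of the convolution map (note $m_!=m_*$ because $G\times_B G\to G$ is proper, not because the complexes are pure and semisimple), Verdier duality, the decomposition theorem, and the $a$-function estimates ensuring that the adjunction is compatible with taking ${}^pH^{a(\bc)}$ and projecting to the cell. None of this is supplied in your sketch, so as written the rigidity is not established.

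Your ``cleaner conceptual route'' through \cite[Corollary 4.5(b)]{BFO2} is also problematic, for two reasons. First, it risks circularity: \cite{BFO2} is built on top of the results of \cite{BFO}, including the multi-fusion structure of $\cP^\bc_{B\times B}$ whose rigidity is precisely what you are trying to prove (its main statements involve the Drinfeld center of this multi-fusion category), so it cannot be used as the foundation for this proposition without carefully checking which results are independent. Second, even granting the equivalence with a subquotient of Harish-Chandra bimodules, the functor $\Hom_{U(\g)}(-,U(\g))$ is ``manifestly rigid'' only on the ambient bimodule category; verifying that evaluation and coevaluation descend to the subquotient with its induced tensor product requires exactly the same kind of compatibility-with-truncation argument you were hoping to avoid. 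So the proposal is a reasonable outline of the first half of the known proof, but the decisive step is missing rather than merely deferred.
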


\begin{remark} \label{J}
 The Grothendieck ring of the category $\cP_{B\times B}^{\bc}$ identifies
with Lusztig's {\em asymptotic Hecke ring} $J_\bc$, see \cite[\S 2.6]{Lt}. This together with
\cite[Corollary 12.16]{Lb} implies that the multi-fusion category $\cP_{B\times B}^{\bc}$ is 
{\em indecomposable} in the sense of \cite[\S 2.4]{ENO}.
\end{remark}

Let $\cP_G$ be the full subcategory of $\C_G$ consisting of direct sums of (unshifted) unipotent
character sheaves, see Remark \ref{chsh}. It is known (see \cite[Section 16]{Lu}, \cite[\S 3.4]{Gr}) 
that there is a unique direct sum decomposition
(or, equivalently, partition of the set of isomorphism classes of unipotent character sheaves)
$\cP_G=\bigoplus_{\bc}\cP_G^{\bc}$ with the following properties:

(i) For $w\in \bc$ and $i\in \BZ$ we have $\phantom{}^pH^i(\Gamma(IC_w))\in \bigoplus_{\bc'\le_{LR}\bc}\cP_G^{\bc'}$; moreover $\phantom{}^pH^i(\Gamma(IC_w))\in \bigoplus_{\bc'<_{LR}\bc}\cP_G^{\bc'}$ if $|i|>a(\bc)$.

(ii) For a simple perverse sheaf $A\in \cP_G^{\bc}$ there exists $w\in \bc$ such that
$A$ is a direct summand of $\phantom{}^pH^{a(\bc)}(\Gamma(IC_w))$.

Let $\widetilde{pr}^{\bc}$ denote the projection functor $\cP_G\to \cP_G^{\bc}$.
We consider the functor $\Gamma^{\bc}: \cP_{B\times B}^{\bc}\to \cP_G^{\bc}$,
$\Gamma^{\bc}(\F):=\widetilde{pr}^{\bc}(\phantom{}^pH^{a(\bc)}(\Gamma(\F))$.
Notice that the property (ii) implies that the functor $\Gamma^{\bc}$ is surjective,
that is each simple object of $\cP_G^{\bc}$ appears as a direct summand of some
$\Gamma^{\bc}(\F)$.
The properties above imply that the restriction of the commutator structure of the
functor $\Gamma$ gives a well defined commutator structure $u^\bc$ 
of the functor $\Gamma^{\bc}$, see \cite[\S 6]{BFO}. We have

\begin{proposition} \label{gamc}
The functor $\Gamma^{\bc}: \cP_{B\times B}^{\bc}\to \cP_G^{\bc}$ is a surjective commutator
functor with canonical automorphism $\Theta$.
\end{proposition}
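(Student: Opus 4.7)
The plan is to derive each of the three claims — surjectivity, well-definedness of the commutator structure, and identification of the canonical automorphism — by reducing to the corresponding facts about $\Gamma$ already established, together with properties (i) and (ii) of the cell decomposition $\cP_G=\bigoplus_\bc \cP_G^\bc$.

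First I would dispatch surjectivity. Given a simple $A\in \cP_G^\bc$, property (ii) furnishes some $w\in \bc$ with $A$ a direct summand of $\phantom{}^pH^{a(\bc)}(\Gamma(IC_w))$. Since $IC_w\in \cP_{B\times B}^\bc$, applying $\widetilde{pr}^\bc$ exhibits $A$ as a summand of $\Gamma^\bc(IC_w)$, as required.

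Next I would construct the commutator structure $u^\bc$ by descent from $u$. For $\F_1,\F_2\in \cP_{B\times B}^\bc$, the decomposition theorem gives $\F_1*\F_2\simeq \bigoplus_i \phantom{}^pH^i(\F_1*\F_2)[-i]$, and the standard Hecke/asymptotic-Hecke analysis shows that every constituent of every $\phantom{}^pH^i(\F_1*\F_2)$ lies in a cell $\bc'\le_{LR}\bc$, while the ``excess'' (namely all $i\ne a(\bc)$ and the kernel of $pr^\bc$ at $i=a(\bc)$) lies in cells strictly smaller than $\bc$. Applying $\Gamma$, property (i) implies that these excess contributions are annihilated by $\widetilde{pr}^\bc\circ\phantom{}^pH^{a(\bc)}$. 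Hence
\[
\widetilde{pr}^\bc\bigl(\phantom{}^pH^{a(\bc)}(\Gamma(\F_1*\F_2))\bigr)\;=\;\Gamma^\bc(\F_1\st\F_2),
\]
and the commutator isomorphism $u_{\F_1,\F_2}$ for $\Gamma$ descends to $u^\bc_{\F_1,\F_2}\colon \Gamma^\bc(\F_1\st\F_2)\simeq \Gamma^\bc(\F_2\st\F_1)$. The hexagon coherence axiom for $u^\bc$ is obtained by applying $\widetilde{pr}^\bc\circ \phantom{}^pH^{a(\bc)}$ to the hexagon for $u$ on a threefold convolution $\F_1*\F_2*\F_3$, using the same vanishing argument applied to the outer perverse cohomologies and to the non-$\bc$ components at the critical degree.

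Finally, the identification of the canonical automorphism is almost immediate. By Proposition~\ref{theta}, the canonical automorphism of the commutator functor $\Gamma$ is $\Theta$. Since $\Theta$ is an automorphism of the identity functor on $\C_G$, it is natural with respect to all morphisms and hence commutes with perverse truncation and with the projection $\widetilde{pr}^\bc$; its restriction to $\cP_G^\bc$ is thus $\Theta$ itself. Applying this naturality to $u_{\be_\bc,\F}=\Theta_{\Gamma(\F)}$ and projecting shows $u^\bc_{\be_\bc,\F}=\Theta_{\Gamma^\bc(\F)}$.

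The main obstacle is the descent step: verifying that the ``error'' contributions from perverse cohomologies of $\F_1*\F_2$ in cells $<_{LR}\bc$, and from the kernel of $pr^\bc$ at degree $a(\bc)$, really are killed after $\widetilde{pr}^\bc\circ\phantom{}^pH^{a(\bc)}\circ\Gamma$. This uses in an essential way the compatibility between the partial order on cells governing the convolution and the partial order governing the decomposition of $\cP_G$ encoded by property (i).
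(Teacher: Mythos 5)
Your treatment of surjectivity (via property (ii)) and of the descent of the commutator structure (via property (i) and the cell-order compatibility) is fine and matches what the paper simply quotes from \cite[\S 6]{BFO}; the paper's own proof takes those points as given and devotes itself entirely to the one claim you dispatch ``almost immediately'', namely the identification of the canonical automorphism. That is precisely where your argument has a genuine gap. You write ``$u_{\be_\bc,\F}=\Theta_{\Gamma(\F)}$'' and attribute it to Proposition \ref{theta}, but that proposition identifies $\Theta$ with $u_{\be,\F}$, where $\be$ is the unit of the ambient convolution category $\C_{B\times B}$ (the constant sheaf on $B$). The canonical automorphism of $\Gamma^\bc$ is instead $u^\bc_{\be_\bc,\F}$, taken with respect to the unit $\be_\bc$ of the truncated category, which is a completely different object (a sum of $IC_d$ over Duflo involutions in $\bc$, not lying over $B$). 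In fact the equation you invoke does not even typecheck: $u_{\be_\bc,\F}$ is an isomorphism $\Gamma(\be_\bc*\F)\simeq\Gamma(\F*\be_\bc)$ between two distinct objects of $\C_G$, not an automorphism of $\Gamma(\F)$, and naturality of $\Theta$ gives no relation between $\Theta$ and the commutator isomorphism at a non-unit object. Since the whole point of the proposition (and ultimately of the paper, which must detect a fourth root of unity) is to control the automorphism attached to the truncated unit, this step cannot be waved through.

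The missing idea is the paper's use of Remark \ref{twice}. Because $u^\bc_{\F,\be_\bc}$ is the identity (first Remark after the definition), the canonical automorphism of $\Gamma^\bc$ at $\F$ equals $u^\bc_{\F,\be_\bc}\circ u^\bc_{\be_\bc,\F}$, which is the restriction to the direct summand $\Gamma^\bc(\be_\bc\st\F)=\Gamma^\bc(\F)$ of the automorphism $u_{\F,\be_\bc}\circ u_{\be_\bc,\F}$ of $\Gamma(\be_\bc*\F)$. By Remark \ref{twice} this composition equals $u_{\be,\,\be_\bc*\F}$, the canonical automorphism of $\Gamma$ evaluated at $\be_\bc*\F$, which Proposition \ref{theta} identifies with $\Theta_{\Gamma(\be_\bc*\F)}$; only now does naturality of $\Theta$ (your correct observation that it commutes with perverse truncation and with $\widetilde{pr}^\bc$) let you restrict to the summand and conclude $u^\bc_{\be_\bc,\F}=\Theta_{\Gamma^\bc(\F)}$. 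Inserting this ``square of the commutator'' step repairs your proof; without it, the claimed identification of the canonical automorphism is unsupported.
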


\begin{proof} We need only check the statement about the canonical automorphism
of $\Gamma^\bc$. By definition the object $\Gamma^\bc(\be_\bc \st \F)$ is a direct summand of 
$\Gamma(\be_\bc *\F)$. The canonical automorphism of 
$\Gamma^\bc(\F)=\Gamma^\bc(\be_\bc \st \F)$ equals 
the restriction of automorphism $u_{\F,\be_\bc}\circ u_{\be_\bc ,\F}$ of $\Gamma(\be_\bc *\F)$ to
this direct summand. By Remark \ref{twice} the composition $u_{\F,\be_\bc}\circ u_{\be_\bc ,\F}$
equals the canonical automorphism of $\Gamma(\be_\bc *\F)$. Now the result follows from
Proposition \ref{theta}.
\end{proof}

\subsection{Proof of Theorem \ref{main}}
Recall that $\bc$ is an exceptional two sided cell. Let $\cP^\bc =\cP^\bc_{B\times B}$.

The following result follows easily from \cite[p. 347]{Sh2} (for $W$ of type $E_7$) and 
\cite[Proposition 3.6]{Sh1} (for $W$ of type $E_8$):

\begin{proposition} Assume that $\bc$ is an exceptional two sided cell. The category $\cP_G^\bc$
has 4 isomorphism classes of simple objects. The automorphism $\Theta$ acts as {\em Id} on 
two of them and as {\em $\pm \sqrt{-1}\mbox{Id}$} on two others. $\square$
\end{proposition}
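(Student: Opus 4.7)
My approach is to combine the classification of the block $\cP_G^{\bc}$ via Lusztig's non-abelian Fourier theory with an interpretation of $\Theta$ as an intrinsic root-of-unity invariant attached to each character sheaf; the specific values of this invariant in the exceptional cases are then exactly what is computed in \cite{Sh1, Sh2}.

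First I identify the simple objects of $\cP_G^\bc$. By Lusztig's general description of the families of unipotent character sheaves (see \cite[Section 17]{Lu}), the isomorphism classes of simple objects of $\cP_G^\bc$ are in canonical bijection with Lusztig's set $\M(A(\bc))$ of $A(\bc)$-conjugacy classes of pairs $(x,\sigma)$ with $x\in A(\bc)$ and $\sigma \in \Irr(C_{A(\bc)}(x))$. For $A(\bc) = \BZ/2\BZ$ this set has exactly four elements, namely $(1,1)$, $(1,\eps)$, $(g,1)$ and $(g,\eps)$, where $g$ is the nontrivial element of $\BZ/2\BZ$ and $\eps$ its nontrivial character. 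This already gives the predicted count of four.

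Next I identify the action of $\Theta_A$ on a simple character sheaf $A$. Unwinding the construction of $\Theta$ recalled just before Proposition \ref{theta}, at a generic point $y$ of the support of $A$ the automorphism $\Theta_A$ acts on the stalk $A_y$ as the scalar by which the given $G$-equivariant structure trivializes the pullback of the local monodromy $z\mapsto z^{-1}yz$. Since $A$ is built in Lusztig's construction from an irreducible $G$-equivariant local system on a single conjugacy class, this scalar is the central character governing the label $(x,\sigma)$ attached to $A$: it equals $1$ whenever $x=1$, and otherwise it is the root of unity (commonly denoted $\omega_A$ or the "eigenvalue of Frobenius" of $A$) by which the image of $x$ acts on the underlying local system.

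Consequently the two "unipotent" simple objects corresponding to $(1,1)$ and $(1,\eps)$ contribute $\Theta_A = \mathrm{Id}$, while the two "non-unipotent" ones corresponding to $(g,1)$ and $(g,\eps)$ contribute the eigenvalues one must feed in from the explicit tables of Shoji: these are computed in \cite[Proposition 3.6]{Sh1} for the two exceptional families in type $E_8$ and in \cite[p.~347]{Sh2} for the exceptional family in type $E_7$, and in every case turn out to be $+\sqrt{-1}$ on one of the two non-unipotent objects and $-\sqrt{-1}$ on the other, yielding the proposition. The main obstacle here is that this last identification cannot be bypassed by a formal argument, since the statement that the attached roots of unity in the exceptional cases are genuine primitive fourth roots of unity (rather than signs, as for every other $\BZ/2\BZ$-family) is precisely what singles out the exceptional cells, and rests on the detailed case-by-case verifications of \cite{Sh1, Sh2}.
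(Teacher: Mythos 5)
Your proposal is correct and follows essentially the same route as the paper, which simply derives the statement from the explicit case-by-case descriptions of the exceptional families of character sheaves in \cite[p.~347]{Sh2} (type $E_7$) and \cite[Proposition 3.6]{Sh1} (type $E_8$); your added scaffolding (the parametrization of the block by Lusztig's set $\M(\BZ/2\BZ)$ giving the count of four, and the stalkwise description of $\Theta$ as the action of $x$ via the equivariant structure) is a reasonable unwinding of why Shoji's tables yield the values $1,1,\pm\sqrt{-1}$. Just be careful that the identification of $\Theta$ with the ``Frobenius eigenvalue'' attached to a character sheaf is itself a nontrivial statement rather than a definition, but since both you and the paper ultimately read the needed roots of unity off Shoji's computations, this does not affect the argument.
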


It follows from Proposition \ref{gamc} that the canonical automorphism of the commutator functor
$\Gamma^\bc : \cP^{\bc}\to \cP_G^{\bc}$ has order 4.

Let $\be_\bc=\oplus_i\be_i$ be the decomposition of the unit object $\be_\bc \in \cP^\bc$ into
irreducible summands. It is clear that $\{ \be_i\}_{i}$ are orthogonal idempotents, that is
$\be_i\ot \be_i\simeq \be_i$ and $\be_i\ot \be_j=0$ if $i\ne j$. For any simple object $X\in \cP^\bc$
there are unique $i$ and $j$ such that $X=\be_i\ot X\ot \be_j$; observe that $i\ne j$ implies
$\Gamma^\bc(X)=0$ (indeed, in this case $\Gamma^\bc(X)=\Gamma^\bc(\be_i\ot X)=
\Gamma^\bc(X\ot \be_i)=\Gamma^\bc(0)=0$). Thus there exists $i$ and a simple object 
$X=\be_i\ot X\ot \be_i\in \cP^\bc$
such that $\Gamma^\bc(X)$ contains an irreducible summand $\F$ with 
$\Theta_\F=\pm \sqrt{-1}\mbox{Id}_\F$. We fix such $i$.

Clearly $\be_i\ot \cP^\bc \ot \be_i\subset \cP^\bc$ is a tensor
subcategory with unit object $\be_i$ and the restriction $\Gamma^\bc|_{\be_i \ot \cP^\bc \ot \be_i}$
is a commutator functor. The choice of $i$ guarantees that the order of the canonical automorphism
of $\Gamma^\bc|_{\be_i \ot \cP^\bc \ot \be_i}$ is 4. 
The Grothendieck ring of the category $\be_i\ot \cP^\bc \ot \be_i$
is computed in \cite[Corollary 3.12]{Ll} (cf. Remark \ref{J}); we see that this category has
precisely 2 isomorphism classes of simple objects $\be_i$ and $\delta_i$, furthermore
$\delta_i\ot \delta_i\simeq \be_i$. The following result is well known; it is a special
case of results of \cite{S}:

\begin{lemma}
Let $\C$ be a fusion category with two (isomorphism classes of) simple objects: 
the unit object $\be$ and one more object $\delta$ such that $\delta \ot \delta\simeq \be$.
Then either $\C \simeq \Vec_{\BZ/2\BZ}$ or  $\C \simeq \Vec_{\BZ/2\BZ}^\omega$. $\square$
\end{lemma}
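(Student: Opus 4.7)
The plan is to classify $\C$ by reducing its entire associativity datum to a single sign. First, I would invoke Mac Lane coherence to normalize: after rescaling the unit constraints $\lambda,\rho$ and then the associators, one may assume $\lambda_X=\rho_X=\id$ and that $a_{X,Y,Z}=\id$ whenever at least one of $X,Y,Z$ is isomorphic to $\be$. By semisimplicity this leaves only one piece of nontrivial data, namely the associator $a_{\delta,\delta,\delta}$.

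Second, I would fix an isomorphism $\phi:\delta\ot\delta\to\be$. Together with the (now trivial) unit constraints, $\phi$ yields canonical identifications of both $(\delta\ot\delta)\ot\delta$ and $\delta\ot(\delta\ot\delta)$ with $\delta$. Since $\delta$ is simple and $\K$ is algebraically closed, $\End(\delta)=\K$, so $a_{\delta,\delta,\delta}$ becomes a scalar $\omega\in\K^*$. Replacing $\phi$ by $\mu\phi$ rescales both identifications by the same factor $\mu$, so these factors cancel and $\omega$ is an intrinsic invariant of the monoidal structure on $\C$.

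Third, I would impose the pentagon axiom on the tuple $(\delta,\delta,\delta,\delta)$. Of the five edges, two, namely $a_{\delta,\delta,\delta}\ot\id_\delta$ and $\id_\delta\ot a_{\delta,\delta,\delta}$, each contribute a factor of $\omega$. The remaining three edges $a_{\delta\ot\delta,\delta,\delta}$, $a_{\delta,\delta\ot\delta,\delta}$, $a_{\delta,\delta,\delta\ot\delta}$ each have $\delta\ot\delta$ in one slot; by naturality of $a$ applied to $\phi$ in that slot, each is determined by an associator with $\be$ as one of its arguments, which is the identity by the normalization in step one. The pentagon relation then reads $\omega\cdot 1\cdot\omega=1\cdot 1$, so $\omega^2=1$ and $\omega=\pm 1$.

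Finally, the two values $\omega=+1$ and $\omega=-1$ yield $\Vec_{\BZ/2\BZ}$ and $\Vec_{\BZ/2\BZ}^\omega$, respectively, by the definition of the latter in the introduction. The main obstacle, though essentially routine, is step three: one must carefully verify that naturality really does trivialize the three edges with a $\delta\ot\delta$ slot, tracking all the unit constraints and confirming the pentagon reads exactly as claimed. Equivalently, the invariant $\omega$ is the value of the normalized associator $3$-cocycle of $\C$ on $(\delta,\delta,\delta)$, and the pentagon constrains its class to lie in $H^3(\BZ/2\BZ,\K^*)\cong\BZ/2\BZ$; this is the general classification of pointed fusion categories cited as \cite{S}.
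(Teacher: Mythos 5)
Your argument is correct and is essentially the approach the paper itself relies on: the paper gives no proof beyond citing Sinh's classification of Gr-categories \cite{S}, i.e.\ pointed fusion categories are classified by normalized $3$-cocycles up to coboundary, and your normalization of the unit/associator, extraction of the scalar $\omega=a_{\delta,\delta,\delta}$, and pentagon computation at $(\delta,\delta,\delta,\delta)$ giving $\omega^2=1$ is exactly the standard verification that $H^3(\BZ/2\BZ,\K^*)\cong\BZ/2\BZ$ in this special case. The only delicate point is the naturality bookkeeping trivializing the three pentagon edges with a $\delta\ot\delta$ slot, which you correctly identify and which is indeed routine.
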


In view of Example \ref{C2} we see that $\be_i\ot \cP^\bc \ot \be_i$ can not
be equivalent to $\Vec_{\BZ/2\BZ}$ (notice that the commutator functor 
$\Gamma^\bc|_{\be_i \ot \cP^\bc \ot \be_i}$ has the left adjoint since it is a functor
between semisimple categories). Hence 
$\be_i\ot \cP^\bc \ot \be_i\simeq \Vec_{\BZ/2\BZ}^\omega$.

Let $\C$ be a multi-fusion category and let $e\in \C$ be a direct summand of the unit object.
Then $e\ot \C \ot e$ is a tensor subcategory of $\C$ and $e\ot \C$ is a {\em module category}
over $e\ot \C \ot e$ via the left multiplication, see \cite{O1}. Each object $X\in \C$ gives rise
to a functor $?\ot X: e\ot \C \to e\ot \C$ commuting with the module structure above. 
Thus we get a tensor functor from $\C$ to the category $\Fun_{e\ot \C \ot e}(e\ot \C,e\ot \C)$
of $e\ot \C \ot e$-module endofunctors of $e\ot \C$. The following result is well known:

\begin{lemma} Assume that multi-fusion category $\C$ is indecomposable.
The functor above is an equivalence $\C \simeq \Fun_{e\ot \C \ot e}(e\ot \C,e\ot \C)$.
\end{lemma}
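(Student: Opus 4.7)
The lemma is a standard consequence of the Morita theory for multi-fusion categories developed in \cite{O1}. The plan is to decompose both sides of the claimed equivalence according to the simple summands of the unit of $\C$ and to reduce the statement to a component-wise identification coming from the module-category duality formalism.

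Write $\be=\bigoplus_i e_i$ for the decomposition of the unit of $\C$ into simple summands, so that $e=\bigoplus_{i\in S}e_i$ for some subset $S$, and set $\C_{ij}:=e_i\ot\C\ot e_j$ and $\M_j:=e\ot\C\ot e_j$. Then $e\ot\C=\bigoplus_j\M_j$ as a left $e\ot\C\ot e$-module category, and by indecomposability of $\C$ every $\M_j$ is nonzero. Both sides of the proposed equivalence carry bi-gradings indexed by pairs $(l,j)$: the source as $\C=\bigoplus_{l,j}\C_{lj}$, the target as $\Fun_{e\ot\C\ot e}(e\ot\C,e\ot\C)=\bigoplus_{l,j}\Fun_{e\ot\C\ot e}(\M_l,\M_j)$. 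The functor $X\mapsto(?\ot X)$ respects these bi-gradings: for $X\in\C_{lj}$, the functor $?\ot X$ sends $\M_l$ into $\M_j$ and kills $\M_k$ for $k\ne l$. It therefore suffices to prove that for every pair $(l,j)$ the component functor $\C_{lj}\to\Fun_{e\ot\C\ot e}(\M_l,\M_j)$, $X\mapsto(?\ot X)$, is an equivalence of semisimple abelian categories.

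Fully faithfulness of each component is immediate from rigidity of $\C$ via a standard adjunction argument. The substantive step is essential surjectivity, which I would deduce from the module-category duality of Ostrik \cite{O1} (see also \cite{ENO}): indecomposability of $\C$ forces each $\M_l$ to be a non-zero indecomposable left $e\ot\C\ot e$-module category, so the dual multi-fusion category $\Fun_{e\ot\C\ot e}(\M_l,\M_l)^{op}$ is Morita equivalent to $e\ot\C\ot e$. One then checks that this dual is naturally identified with $\C_{ll}$ and, more generally, that $\Fun_{e\ot\C\ot e}(\M_l,\M_j)$ is identified with the invertible $(\C_{jj},\C_{ll})$-bimodule category $\C_{lj}$, precisely via $X\mapsto(?\ot X)$. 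The main obstacle is to carry out these identifications canonically, ensuring that the tensor structure (composition on the target versus tensor product on the source) matches up; this amounts to bookkeeping of the Morita equivalence between $\C$ and $e\ot\C\ot e$ implemented by the bimodule category $e\ot\C$.
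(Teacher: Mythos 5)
Your componentwise reduction and the remark that indecomposability forces each $\M_l=e\ot\C\ot e_l$ to be nonzero are fine, and full faithfulness is indeed a standard (if not quite ``immediate'') rigidity argument: evaluate a module natural transformation on a simple $M\in\M_l$ and use that a dual of $M$ tensored with $M$ contains $e_l$ with multiplicity one, so that $\Hom(X,Y)$ sits inside $\Hom(M\ot X,M\ot Y)$ compatibly. The genuine gap is essential surjectivity. As stated, your argument for it is circular: knowing that $\Fun_{e\ot\C\ot e}(\M_l,\M_l)^{\op}$ is Morita equivalent to $e\ot\C\ot e$ does not determine this category, and the assertion that ``one then checks'' it is identified with $\C_{ll}$ (and $\Fun_{e\ot\C\ot e}(\M_l,\M_j)$ with $\C_{lj}$) via $X\mapsto (?\ot X)$ is precisely the component of the lemma you are trying to prove --- which you yourself defer as ``the main obstacle.'' So the proposal restates the statement inside the Morita formalism rather than proving it.

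The paper closes exactly this gap, with no componentwise bookkeeping, by computing the dual on the easy side first: $e\ot\C$ is a direct summand of the regular \emph{right} $\C$-module category, so every right $\C$-module endofunctor $F$ of $e\ot\C$ is left tensoring by $F(e)\in e\ot\C\ot e$; hence $e\ot\C\ot e\simeq \Fun_\C(e\ot\C,e\ot\C)$ with explicit inverse $F\mapsto F(e)$, i.e.\ $e\ot\C\ot e$ is the dual of $\C$ with respect to the module category $e\ot\C$. The lemma then follows in one stroke from the double-dual (double centralizer) theorem \cite[Corollary 4.1]{O1}, the indecomposability of $\C$ being what guarantees that $e\ot\C$ is a faithful $\C$-module category (no two-sided summand of $\C$ acts by zero), which is the hypothesis that theorem needs and the place where the lemma would otherwise fail. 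This second ingredient is what your plan is missing: either quote this duality theorem applied to the pair $(\C, e\ot\C)$, or replace it by an honest computation (e.g.\ realize $\M_l\simeq \Mod_A(e\ot\C\ot e)$ for an algebra $A$ via internal Hom and identify module functor categories with bimodule categories); Morita equivalence alone will not pin down $\Fun_{e\ot\C\ot e}(\M_l,\M_j)$.
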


\begin{proof} The category $e\ot \C$ is a right module category over $\C$ and the obvious 
functor $e\ot \C \ot e\to \Fun_\C(e\ot \C,e\ot \C)$ is an equivalence with the inverse functor
$F\mapsto F(e)$. In the language of \cite[\S 4.2]{O1}, this means that the category
$e\ot \C \ot e$ is dual to the category $\C$. Now, the result follows by duality, see 
\cite[Corollary 4.1]{O1}.
\end{proof}

Thus we can reconstruct multi-fusion category $\C$ from smaller category $e\ot \C \ot e$ 
and module category $e\ot \C$. We apply this to the case $\C =\cP^\bc$ and $e=\be_i$. 
It is well known (see e.g. \cite[Remark 6.2(iii)]{O1}) that the fusion category 
$\be_i\ot \cP^\bc \ot \be_i\simeq \Vec_{\BZ/2\BZ}^\omega$ has only one 
indecomposable semisimple module category, namely the regular
module category $\Vec_{\BZ/2\BZ}^\omega$. Hence $\cP^\bc \simeq \Fun_{\Vec_{\BZ/2\BZ}^\omega}(\M,\M)$ where $\M$ is a direct sum of several copies of this module category
(the number of indecomposable summands in $\M$ equals the number of irreducible
summands in $\be_\bc$). 
The same argument applies to the multi-fusion category 
$\Vec_{\BZ/2\BZ}^\omega \bt \Coh(Y'\times Y')$ (in this case the irreducible summands of the
unit object are naturally labeled by the elements of the set $Y'$). Thus choosing the set $Y'$
of appropriate size we get
$$\cP^\bc \simeq \Fun_{\Vec_{\BZ/2\BZ}^\omega}(\M,\M)\simeq \Vec_{\BZ/2\BZ}^\omega \bt \Coh(Y'\times Y')$$
and Theorem \ref{main} is proved.

\begin{remark} In \cite[Section 4]{BFO2} the category $\cP_G^\bc$ is endowed with tensor
structure; moreover \cite[Theorem 5.3]{BFO2} states that 
$\cP_G^\bc \simeq \Z(\cP^\bc)$ and the functor $\Gamma^\bc$ is isomorphic
to the induction functor $\Ind : \cP^\bc \to \Z(\cP^\bc)\simeq \cP_G^\bc$
(one can use it to give a shorter proof of Theorem \ref{main}). This implies that for an exceptional
cell $\bc$ we have $\cP^\bc \simeq \Z(\Vec_{\BZ/2\BZ}^\omega)$ (see \cite[Corollary 5.4(b)]{BFO2}
for the case when $\bc$ is not exceptional).
\end{remark}

\bibliographystyle{ams-alpha}

\end{document}